\newtheorem{Theorem}{Theorem}
\newtheorem{Lemma}{Lemma}
\newtheorem{Example}{Example}
\newenvironment{proof}{{\textbf{Proof.}}\,}{\hfill$\hbox{\rule{5pt}{5pt}}$\\}
\begin{document}
\title{A mixed integer programming approach to the tensor complementarity problem\footnote{This work was supported by the National Nature Science
Foundation of China (Grant No. 11671220, 11771244) and the Nature Science Foundation of Shandong Province (ZR2016AM29).} }
\author{Shouqiang Du$^a$ \quad\quad Liping Zhang$^b$\thanks{Cooresponding Author. Email address: lzhang@math.tsinghua.edu.cn}\\
\footnotesize{$^a$ School of Mathematics and Statistic,
Qingdao University, Qingdao,  266071, China.}\\
\footnotesize{$^b$ Department of Mathematical Sciences,  Tsinghua University, Beijing, 100084, China.
}} \vskip6mm
 \date{}
\maketitle

{\bf Abstract:} In this paper, we establish a new approach to solve the tensor complementarity problem (TCP). A mixed integer programming model is given and the TCP is solved by solving the model. The TCP  is shown to be formulated as an equivalent mixed integer feasibility problem. Based on the reformulation, some conditions are obtained to guarantee the solvability  of the TCP. Specially, a sufficient condition is given for TCP without solutions. A necessary and sufficient condition is given for existence of solutions. We also give a concrete bound for the solution set of the TCP with positive definite tensors. Moreover, we show that the TCP with a diagonal positive definite tensor has a unique solution. Numerical experiments on several test problems illustrate the efficiency of the proposed approach in terms of the quality of the obtained solutions.

{\bf Key Words:}  Tensor complementarity problem;  mixed integer programming;  boundedness.

{\bf AMS Subject Classification:}  15A69; 90C11\vskip 6mm

\section{Introduction}

An $m$th-order $n$-dimensional tensor $\mathcal{A}=(a_{i_1\ldots i_m})$ is a multi-array of real entries
$a_{i_1\ldots i_m}$, where $i_j\in [n]:=\{1,\ldots,n\}$ for $j\in [m]:=\{1,\ldots,m\}$. Let $x\in \mathbb{R}^n$. Then $\mathcal{A}x^{m-1}$
is a vector in $\mathbb{R}^n$ with its $i$th component as
$$
(\mathcal{A}x^{m-1})_i:=\sum_{i_2,\ldots,i_m=1}^{n}a_{ii_2\ldots i_m}x_{i_2}\cdots x_{i_m}
$$
for $i\in[n]$. Obviously, each component of $\mathcal{A}x^{m-1}$ is a homogeneous polynomial of degree
$m-1$. For any $q\in \mathbb{R}^n$, we consider the tensor complementarity problem, a special class of
nonlinear complementarity problems, denoted by TCP$(\mathcal{A},q)$: finding  $x\in \mathbb{R}^n$ such that
$$
x\geq 0,\quad {\mathcal{A}}x^{m-1}+q\geq 0,\quad x^T({\mathcal{A}}x^{m-1}+q)=0,$$
or showing that no such vector exists. When $m=2$, TCP$(\mathcal{A},q)$ is just the well-known linear complementarity
problem (LCP). The notion of the tensor complementarity problem was used firstly by Song and Qi \cite{songqi2015}.
They showed TCP$(\mathcal{A},q)$ with a nonnegative tensor has a solution
if and only if all principal diagonal entries of such a tensor are positive. Recently,
many theoretical results about the properties of
the solution set of TCP$(\mathcal{A},q)$  have been developed, including existence of solution \cite{glqx,hsw10,songqi15,songqi2016,whb9},
global uniqueness of solution \cite{bhw,glqx}, boundedness of solution set \cite{cqw2016,dlq,sy2016,sq17,whb9}, stability
of solution \cite{ylh17}, sparsity of solution \cite{lqx2017}, solution methods \cite{llv17,xlx17}, and so on. In addition, an application of the
TCP was given in \cite{hq19}. Among them,
 Song and Qi \cite{songqi2016}
discussed the solution of TCP$(\mathcal{A},q)$  with a strictly semi-positive tensor. Che, Qi, Wei \cite{cqw2016} discussed the existence and uniqueness of solution of
TCP$(\mathcal{A},q)$  with some special tensors. Song and Yu \cite{sy2016} obtained global upper bounds of the solution of the TCP$(\mathcal{A},q)$
with a strictly semi-positive tensor. Luo, Qi and Xiu \cite{lqx2017} obtained the sparsest solutions
to TCP$(\mathcal{A},q)$ with a Z-tensor. Gowda, Luo, Qi and Xiu \cite{glqx} studied the various equivalent
conditions for the existence of solution to TCP$(\mathcal{A},q)$ with a Z-tensor. Ding, Luo and Qi \cite{dlq}
showed the properties of  TCP$(\mathcal{A},q)$ with a P-tensor. Bai, Huang and Wang \cite{bhw} considered
the global uniqueness and solvability for TCP$(\mathcal{A},q)$ with a strong P-tensor. Wang, Huang
and Bai \cite{whb9} gave the solvability of TCP$(\mathcal{A},q)$ with exceptionally regular tensors. Huang, Suo
and Wang \cite{hsw10} presented  several classes of Q-tensors. To the best of our knowledge, the study on solution algorithms for solving TCP$(\mathcal{A},q)$  has few results. So,  how to establish efficient solution methods for TCP$(\mathcal{A},q)$ is an interesting topic.

The tensor complementarity problem, as a
natural extension of the linear complementarity problem seems to have similar properties
to such a problem, and to have its particular and nice properties other than ones of the
classical nonlinear complementarity problem. So how to identify their good properties and
applications will be very interesting. In past several decades, there have been numerous
mathematical workers concerned with the solution of LCP and solution methods \cite{lcp,ncp}.
There are so many well-established and fruitful methods for solving LCP. Among them, solving LCP via integer programming is an interesting and important approach \cite{intp1,intp}. Motivated by the study on solving LCP via integer programming, we consider a new approach to solve TCP$(\mathcal{A},q)$ by a mixed integer programming.  We show that TCP$(\mathcal{A},q)$ is solvable if and only if  TCP$(\mathcal{A},\beta^{m-1}q)$ has a solution for  $\beta>0$. Based on the observation,  we first establish a mixed integer programming (MIP) model and want to get solutions by solving the MIP. Moreover, we show that every TCP$(\mathcal{A},q)$ can be written as an equivalent mixed integer feasibility problem. Since TCP$(\mathcal{A},q)$ with a positive definite tensor $\mathcal{A}$ has a nonempty and compact solution set \cite{cqw2016}, a concrete bound is given in this paper. Specially, we also prove that if $\mathcal{A}$ is a diagonal positive definite tensor then TCP$(\mathcal{A},q)$ has a unique solution, which answers Conjecture 5.1 in \cite{cqw2016}.

This paper is organized as follows. In Section 2,  we list some preliminaries. In Section 3, we establish a mixed integer programming approach to solve TCP$(\mathcal{A},q)$ and some properties of TCP$(\mathcal{A},q)$ are given. Some numerical results are reported in Section 4. Some conclusions are given in the final section.

\section{Preliminaries}

In this section, we define the notations and collect some basic definitions and facts,
which will be used later on.

Denote $\mathbb{R}^n:=\{(x_1,x_2,\ldots,x_n)^T: x_i\in \mathbb{R}, i\in[n]\}$,  where
$\mathbb{R}$ is the set of real numbers. Let $e:=(1,\ldots,1)^T\in \mathbb{R}^n$.  Denote the set of all real $m$th-order $n$-dimensional tensors by $T_{m,n}$. Then, $T_{m,n}$ is a linear space of dimension $n^m$.
Let $\mathcal{A}=(a_{i_1\ldots i_m})\in T_{m,n}$. If the entries $a_{i_1\dots i_m}$ are invariant under any permutation
of their indices, then $\mathcal {A}$ is called a symmetric tensor. Denote the set of all real $m$th-order $n$-dimensional symmetric tensors by $S_{m,n}$. Then, $S_{m,n}$ is a linear subspace of $T_{m,n}$. Let
$\mathcal{A}=(a_{i_1\ldots i_m})\in T_{m,n}$ and $x\in \mathbb{R}^n$. Then $\mathcal{A}x^m$ is a homogeneous polynomial of degree
$m$, defined by
$$
\mathcal{A}x^m:=\sum_{i_1,\ldots,i_m=1}^na_{i_1\ldots i_m}x_{i_1}\cdots x_{i_m}.
$$
Moreover, $\|x\|$ denotes $2$-norm of $x$ and $x^T$ represents the transpose of $x$. Given two column vectors $x,y\in \mathbb{R}^n$, $x^\top y$ represents the inner product of $x$ and $y$.

In the following, we recall the definitions of E-eigenvalues and Z-eigenvalues of tensors in $T_{m,n}$ in \cite{CPZ1,Li,Qi}.
Denote $\mathbb{C}^n:= \{(x_1,x_2,\ldots,x_n)^T: x_i\in \mathbb{C}, i\in[n]\}$, where $\mathbb{C}$ is the set of complex
numbers. For any vector $x\in \mathbb{C}^n$, $x^{[m-1]}$  is a vector in $\mathbb{C}^n$ with its $i$th component
defined as $x_i^{m-1}$ for $i\in [n]$. Let $\mathcal{A}\in T_{m,n}$.
If and only if there is a nonzero vector
$x\in \mathbb{C}^n$ and a number $\lambda \in \mathbb{C}$ such that
\begin{equation}\label{heigen}
 \mathcal{A}x^{m-1} = \lambda x, \quad x^\top x=1,
\end{equation}
then $\lambda$ is called an {\it E-eigenvalue} of $\mathcal{A}$ and $x$ is called an {\it E-eigenvector} of $\mathcal{A}$, associated
with $\lambda$. If the E-eigenvector $x$ is real, then the E-eigenvalue $\lambda$ is also real. In this case,
$\lambda$ and $x$ are called a {\it Z-eigenvalue} and a {\it Z-eigenvector} of $\mathcal{A}$, respectively. For a symmetric tensor, Z-eigenvalues always exist. An even order symmetric tensor is
positive (semi-)definite if and only if all of its Z-eigenvalues are
positive (non-negative) \cite{Qi}. By \cite[Theorem 5]{Qi}, for both the largest Z-eigenvalue $\lambda_{max}$ and the smallest
Z-eigenvalue $\lambda_{min}$ of a symmetric tensor $\mathcal {A}$, we have
\begin{equation}\label{largest}
\lambda_{\max}=\max\left\{\mathcal{A}x^m: x^\top x=1\right\}
\end{equation}
and
\begin{equation}\label{smallest}
\lambda_{\min}=\min\left\{\mathcal{A}x^m: x^\top x=1\right\}.
\end{equation}

\section{A mixed integer programming method}

In this section, we first propose a new approach to solve TCP$(\mathcal{A},q)$ by a mixed integer programming, and then we give a condition to guarantee the boundedness of its solution set.

Clearly, TCP$(\mathcal{A},q)$  is equivalent to the following constrained optimization problem with optimal value $0$,
\begin{eqnarray}\label{eq2}
\min && \phi(x):=x^\top(q+{\mathcal{A}}x^{m-1})\nonumber\\
s.t. && x\ge 0,\quad q+{\mathcal{A}}x^{m-1}\ge 0.
\end{eqnarray}
It is easy to see that $\phi(x)\geq 0$ if the feasible region of (\ref{eq2}) is nonempty, that is, the problem (\ref{eq2}) is bounded from below.

Based on the model (\ref{eq2}), some properties of the solution set of TCP$(\mathcal{A},q)$ are easily obtained in some special cases:
\begin{itemize}
\item Let $q\geq0$. It is clear that $x=0$ is a trivial solution of TCP$(\mathcal{A},q)$ for any tensor $\mathcal {A}\in T_{m,n}$. So, in general, we assume that $q\ngeq 0$.
\item Let $\mathcal {A}=(a_{i_1\ldots i_m})\in T_{m,n}$ be a diagonal tensor, i.e., its off-diagonal entries are zero. In this case, the constrained optimization problem (\ref{eq2}) is reduced as follows:
\begin{eqnarray*}
\min && \sum_{i=1}^{n}(a_{i\ldots i}x_{i}^{m-1}+q_i)x_i \\
s.t. &&   x_i\geq 0, \quad  a_{i\ldots i}x_{i}^{m-1}+q_i\geq0, \quad i\in [n].
\end{eqnarray*}
From the above model, we know that  TCP$(\mathcal{A},q)$ is infeasible if there exists $i_0\in [n] $ such that $a_{i_0\cdots i_0}\leq 0$ and $ q_{i_0}<0$. Moreover, if $ q_{i_0}<0$ and $a_{i_0\cdots i_0}>0$ then TCP$(\mathcal{A},q)$ has a solution $x^*$ with its $i$th component:
\begin{equation}\label{solution}
x^*_i=\left\{\begin{array}{ll}
                      0,& \mbox{if $q_i\ge 0$}. \\
                 \left(\frac{-q_i}{a_{i\cdots i}}\right)^{\frac{1}{m-1}},& \mbox{if $q_i<0$ and $a_{i\ldots i}>0$}.
                    \end{array}
                  \right.
                  \end{equation}
 \item In \cite[Conjecture 5.1]{cqw2016}, Che, Qi and Wei  proposed a conjecture that  if a diagonal tensor $\mathcal{A}$ is  positive definite  then  TCP$(\mathcal{A},q)$ has a unique solution. Here, we can give a confirmed answer based on (\ref{eq2}). Since the diagonal tensor $\mathcal{A}$ is positive definite, $a_{i\ldots i}>0$ for $i\in [n]$ and every solution $x$ of TCP$(\mathcal{A},q)$ satisfies
\begin{equation}\label{diag}
     x_i\ge 0,\quad a_{i\ldots i}x_{i}^{m-1}+q_i\geq0, (a_{i\ldots i}x_{i}^{m-1}+q_i)x_i=0, \quad i\in [n].
\end{equation}
When $q\ge 0$, in this case $x=0$. In fact, if there exists some $i^*\in [n]$ such that $x_{i^*}>0$, then $a_{i^*\ldots i^*}x_{i^*}^{m-1}+q_{i^*}>0$. This is  a  contradiction with (\ref{diag}). When $q<0$, in this case
$$
x=\left(\frac{-q_i}{a_{i\cdots i}}\right)^{\frac{1}{m-1}}>0.$$ In fact, if there exists some $i^*\in [n]$ such that $x_{i^*}=0$, then $a_{i^*\ldots i^*}x_{i^*}^{m-1}+q_{i^*}=q_{i^*}<0$. This is  a  contradiction with (\ref{diag}). In other cases, $x$ must be in the form of (\ref{solution}). Hence, TCP$(\mathcal{A},q)$ with $\mathcal{A}$ being diagonal and  positive definite has a unique solution.
\end{itemize}

We summarize the above discussions in the following theorem.
\begin{Theorem}\label{thm0}
Given a TCP$(\mathcal{A},q)$. If the vector $q\ge 0$ then TCP$(\mathcal{A},q)$ has a trivial solution. Let $\mathcal{A}$ be a diagonal tensor, if there exists some $i\in [n]$ such that $q_i<0$ and $a_{i\ldots i}\le 0$ then TCP$(\mathcal{A},q)$ has no solution. Otherwise, TCP$(\mathcal{A},q)$ has  a solution in the form of (\ref{solution}). Furthermore, if the diagonal tensor $\mathcal{A}$ is positive definite then TCP$(\mathcal{A},q)$ has  a unique solution $x^*\in \mathbb{R}^n$ with its $i$th component:
$$
x^*_i=\begin{cases}
0, &\mbox{if $q_i\ge 0$},\\
                 \left(\frac{-q_i}{a_{i\cdots i}}\right)^{\frac{1}{m-1}},& \mbox{if $q_i<0$},
                 \end{cases} \quad i\in [n].
                 $$
\end{Theorem}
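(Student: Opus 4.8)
The plan is to exploit the fact that, for a diagonal tensor, the problem (\ref{eq2}) splits coordinatewise into $n$ independent scalar complementarity problems, so that each assertion of the theorem can be checked one index at a time. Concretely, since every off-diagonal entry of $\mathcal{A}$ vanishes, the $i$th component of $\mathcal{A}x^{m-1}$ equals $a_{i\ldots i}x_i^{m-1}$, and a vector $x$ solves TCP$(\mathcal{A},q)$ if and only if for each $i\in[n]$ the scalar conditions
$$x_i\ge 0,\quad a_{i\ldots i}x_i^{m-1}+q_i\ge 0,\quad (a_{i\ldots i}x_i^{m-1}+q_i)x_i=0$$
hold. I would establish this decoupling first, since it reduces the whole statement to elementary one-variable analysis.

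For the first assertion I would simply observe that if $q\ge 0$ then $x=0$ satisfies $x\ge 0$, $\mathcal{A}0^{m-1}+q=q\ge 0$, and $x^\top(\mathcal{A}x^{m-1}+q)=0$, so the trivial solution exists for any $\mathcal{A}\in T_{m,n}$. For the infeasibility claim I would argue by contradiction within a single coordinate: if $q_{i}<0$ and $a_{i\ldots i}\le 0$, then for any $x_i\ge 0$ we have $x_i^{m-1}\ge 0$, hence $a_{i\ldots i}x_i^{m-1}\le 0$ and therefore $a_{i\ldots i}x_i^{m-1}+q_i\le q_i<0$, which violates feasibility; thus no $x$ can satisfy the $i$th scalar problem.

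For the existence part under the \emph{otherwise} hypothesis, the negation of the infeasibility condition guarantees $a_{i\ldots i}>0$ whenever $q_i<0$, so the candidate $x^*$ in (\ref{solution}) is well defined. I would then verify directly that it solves each scalar problem: when $q_i\ge 0$, the choice $x_i^*=0$ gives nonnegativity and makes the complementarity product vanish; when $q_i<0$, the choice $x_i^*=(-q_i/a_{i\ldots i})^{1/(m-1)}>0$ makes $a_{i\ldots i}(x_i^*)^{m-1}+q_i=0$, again annihilating the product. For the final uniqueness statement I would first record that a diagonal positive definite tensor satisfies $a_{i\ldots i}>0$ for every $i$, since $\mathcal{A}x^m=\sum_{i}a_{i\ldots i}x_i^m$ and evaluating at the $i$th standard coordinate vector (all entries zero except a $1$ in position $i$) yields $\mathcal{A}x^m=a_{i\ldots i}$, which must be positive. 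With this in hand, I treat the two cases: if $q_i\ge 0$, any $x_i>0$ would force $a_{i\ldots i}x_i^{m-1}+q_i>0$ and hence $x_i=0$ by complementarity, a contradiction, so $x_i=0$ is forced; if $q_i<0$, the value $x_i=0$ is infeasible because it gives $q_i<0$, so $x_i>0$, and complementarity then forces $a_{i\ldots i}x_i^{m-1}+q_i=0$.

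The only point requiring more than routine scalar arithmetic is justifying that positive definiteness of the diagonal tensor yields $a_{i\ldots i}>0$ for every $i$; once that is secured, the uniqueness of the nonnegative root follows immediately from the strict monotonicity (hence injectivity) of $t\mapsto t^{m-1}$ on $[0,\infty)$, which pins down $x_i=(-q_i/a_{i\ldots i})^{1/(m-1)}$ in the case $q_i<0$. Everything else is a direct consequence of the coordinatewise decoupling together with substitution into the scalar conditions.
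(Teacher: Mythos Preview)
Your proposal is correct and follows essentially the same route as the paper: both exploit the coordinatewise decoupling of TCP$(\mathcal{A},q)$ for diagonal $\mathcal{A}$ and then carry out the same case analysis (trivial solution when $q\ge 0$, infeasibility when $q_i<0$ and $a_{i\ldots i}\le 0$, explicit solution otherwise, and the contradiction argument for uniqueness under positive definiteness). Your write-up is in fact slightly more explicit than the paper's---you justify $a_{i\ldots i}>0$ via the standard coordinate vectors and invoke monotonicity of $t\mapsto t^{m-1}$ for uniqueness of the root---but the underlying argument is the same.
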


Given a general TCP$(\mathcal{A},q)$ with $\mathcal{A}\in T_{m,n}$ and $q\in \mathbb{R}^n$, we now show that it can always be solved by solving a mixed integer programming. Moreover, every tensor complementarity problem can be written as an equivalent mixed integer feasibility problem. These results are based on the following lemma.
\begin{Lemma}\label{lem1}
 TCP$(\mathcal{A},q)$ has a solution if and only if  TCP$(\mathcal{A},\beta^{m-1}q)$ has a solution for all real number $\beta>0$.
\end{Lemma}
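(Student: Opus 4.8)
The plan is to exploit the positive homogeneity of the map $x \mapsto \mathcal{A}x^{m-1}$. Since each component $(\mathcal{A}x^{m-1})_i$ is a homogeneous polynomial of degree $m-1$, scaling the argument by any $\beta > 0$ yields $\mathcal{A}(\beta x)^{m-1} = \beta^{m-1}\mathcal{A}x^{m-1}$. This single identity drives the whole equivalence, because it converts the perturbed right-hand side $\beta^{m-1}q$ into a common scalar factor that can be pulled out of all three defining relations of the TCP.

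First I would prove the forward implication. Suppose $x$ solves TCP$(\mathcal{A},q)$ and fix an arbitrary $\beta > 0$; I claim $y := \beta x$ solves TCP$(\mathcal{A},\beta^{m-1}q)$. Nonnegativity $y \ge 0$ is immediate from $\beta > 0$ and $x \ge 0$. For the second relation, the homogeneity identity gives $\mathcal{A}y^{m-1} + \beta^{m-1}q = \beta^{m-1}(\mathcal{A}x^{m-1}+q) \ge 0$, where the inequality uses $\beta^{m-1} > 0$ together with the feasibility of $x$. For the complementarity relation, $y^T(\mathcal{A}y^{m-1}+\beta^{m-1}q) = \beta^m\, x^T(\mathcal{A}x^{m-1}+q) = 0$. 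Hence $y$ is a solution, and since $\beta > 0$ was arbitrary, TCP$(\mathcal{A},\beta^{m-1}q)$ is solvable for every $\beta > 0$.

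For the converse, I would observe that the relation is symmetric: the map $x \mapsto \beta x$ is a bijection between the solution sets of TCP$(\mathcal{A},q)$ and TCP$(\mathcal{A},\beta^{m-1}q)$, with inverse $y \mapsto \beta^{-1}y$ (which is exactly the forward construction applied to the tensor $\mathcal{A}$, the vector $\beta^{m-1}q$, and the scaling $1/\beta$). Consequently, if TCP$(\mathcal{A},\beta^{m-1}q)$ has a solution for all $\beta > 0$, then choosing $\beta = 1$ already yields a solution of TCP$(\mathcal{A},q)$, and no further work is needed.

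There is essentially no hard step here: the entire argument reduces to the homogeneity of $\mathcal{A}x^{m-1}$ and the elementary bookkeeping of the three complementarity conditions under scaling. The only point deserving care is to track the exponents correctly---degree $m-1$ on the map, the power $m-1$ on the scaled vector $q$, and the power $m$ arising in the complementarity product---so that the common factor $\beta^{m-1}$ (respectively $\beta^m$) factors out cleanly and the positivity of $\beta$ guarantees that both the inequalities and the inner-product condition are preserved.
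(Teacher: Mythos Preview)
Your proof is correct and follows exactly the same scaling idea as the paper: the paper's own proof is the one-line remark that if $x$ solves TCP$(\mathcal{A},q)$ and $y$ solves TCP$(\mathcal{A},\beta^{m-1}q)$ then $y=\beta x$, which is precisely the homogeneity argument you have written out in full detail. Your version is in fact more complete than the paper's, since you explicitly verify all three complementarity conditions and spell out both implications.
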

\begin{proof}
Let $x,y\in\mathbb{R}^n$ be the solutions of TCP$(\mathcal{A},q)$ and  TCP$(\mathcal{A},\beta^{m-1}q)$, respectively. Then $y=\beta x$.
\end{proof}

Consider  the following mixed integer programming (MIP):
\begin{eqnarray} \label{mip}
\max_{\alpha,y,z} && \alpha^{m-1} \\
s.t. && 0\leq\mathcal{A}y^{m-1}+\alpha^{m-1}q\leq e-z, \nonumber\\
&& 0\leq y\leq z,\quad \alpha \ge 0, \nonumber\\
&& z\in\{0,1\}^n. \nonumber
\end{eqnarray}
The following theorem shows the relationship between TCP$(\mathcal{A},q)$ and  MIP (\ref{mip}).
\begin{Theorem}\label{thm1}
Let $(\alpha^*,y^*,z^*)$ be any optimal solution of MIP (\ref{mip}). If $\alpha^*>0$,  then $x={y^*}/{\alpha^*}$ solves TCP$(\mathcal{A},q)$. If  $\alpha^*=0$, then TCP$(\mathcal{A},q)$ has no solution.
\end{Theorem}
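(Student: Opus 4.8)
The plan is to read the complementarity structure of TCP$(\mathcal{A},q)$ directly off the feasibility constraints of MIP~(\ref{mip}), and then transfer a rescaled solution back and forth using the degree-$(m-1)$ homogeneity that underlies Lemma~\ref{lem1}. The two cases $\alpha^*>0$ and $\alpha^*=0$ are treated separately, and both hinge on the fact that $\mathcal{A}(\beta x)^{m-1}=\beta^{m-1}\mathcal{A}x^{m-1}$ for every scalar $\beta$.

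For the case $\alpha^*>0$, I would first exploit the integrality $z^*\in\{0,1\}^n$ to produce a componentwise dichotomy. The box constraint $0\le y^*\le z^*$ forces $y^*_i=0$ whenever $z^*_i=0$, while the two-sided constraint $0\le \mathcal{A}(y^*)^{m-1}+(\alpha^*)^{m-1}q\le e-z^*$ forces $(\mathcal{A}(y^*)^{m-1}+(\alpha^*)^{m-1}q)_i=0$ whenever $z^*_i=1$, since then the upper bound $e_i-z^*_i$ equals $0$ and coincides with the lower bound. Hence $y^*_i\,(\mathcal{A}(y^*)^{m-1}+(\alpha^*)^{m-1}q)_i=0$ for every $i$, which together with $y^*\ge 0$ and $\mathcal{A}(y^*)^{m-1}+(\alpha^*)^{m-1}q\ge 0$ shows that $y^*$ solves TCP$(\mathcal{A},(\alpha^*)^{m-1}q)$. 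Applying Lemma~\ref{lem1} with $\beta=\alpha^*>0$ then gives a solution of TCP$(\mathcal{A},q)$, and the relation $y^*=\alpha^* x$ recorded in that lemma identifies it as $x=y^*/\alpha^*$. I would also confirm this directly: homogeneity turns $\mathcal{A}(y^*)^{m-1}+(\alpha^*)^{m-1}q$ into $(\alpha^*)^{m-1}(\mathcal{A}x^{m-1}+q)$, from which $x\ge 0$, $\mathcal{A}x^{m-1}+q\ge 0$, and $x^\top(\mathcal{A}x^{m-1}+q)=0$ all follow by dividing through by the positive factor $(\alpha^*)^{m-1}$.

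For the case $\alpha^*=0$, I would argue by contradiction. Since $(\alpha^*,y^*,z^*)$ is optimal with $\alpha^*=0$, the optimal value of MIP~(\ref{mip}) is $0$, and because $\alpha\ge 0$ implies $\alpha^{m-1}\ge 0$, this means every feasible point necessarily has $\alpha=0$ (the feasibility of $(0,0,0)$ confirms the value $0$ is indeed attained and the model is well posed). Suppose now that TCP$(\mathcal{A},q)$ had a solution $\bar x$. By Lemma~\ref{lem1}, $\beta\bar x$ solves TCP$(\mathcal{A},\beta^{m-1}q)$ for every $\beta>0$, so $\beta\bar x\ge 0$, $\mathcal{A}(\beta\bar x)^{m-1}+\beta^{m-1}q=\beta^{m-1}(\mathcal{A}\bar x^{m-1}+q)\ge 0$, and the inner product vanishes. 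I would then construct a feasible triple with positive $\alpha$ by setting $\alpha=\beta$, $y=\beta\bar x$, and $z_i=1$ when $\bar x_i>0$ and $z_i=0$ otherwise. Complementarity of $\bar x$ makes the entries of $\mathcal{A}(\beta\bar x)^{m-1}+\beta^{m-1}q$ indexed by $z_i=1$ exactly equal to $0=e_i-z_i$, and forces $y_i=0=z_i$ where $z_i=0$; the remaining upper bounds $y_i\le 1$ and $(\mathcal{A}(\beta\bar x)^{m-1}+\beta^{m-1}q)_i\le 1$ are then met by taking $\beta>0$ small enough, since both $\beta\bar x$ and $\beta^{m-1}(\mathcal{A}\bar x^{m-1}+q)$ tend to $0$ as $\beta\to 0^+$. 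This yields a feasible point with $\alpha=\beta>0$, contradicting $\alpha^*=0$; hence TCP$(\mathcal{A},q)$ has no solution.

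The sign-keeping and homogeneity computations are routine; the step that needs genuine care is the construction in the second case. The main obstacle will be choosing the binary vector $z$ and the scale $\beta$ \emph{simultaneously} so that the two-sided constraint $0\le \mathcal{A}y^{m-1}+\alpha^{m-1}q\le e-z$ and the box $0\le y\le z$ are both respected: the lower bounds and the equality constraints are dictated rigidly by the support of $\bar x$, whereas the upper bounds imposed by $e$ can only be satisfied after shrinking, and it is exactly the homogeneity furnished by Lemma~\ref{lem1} that guarantees a sufficiently small admissible $\beta$ exists.
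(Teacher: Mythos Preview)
Your proof is correct and follows essentially the same approach as the paper: both use the $z\in\{0,1\}^n$ dichotomy together with degree-$(m-1)$ homogeneity for the case $\alpha^*>0$, and a rescaled-feasible-triple contradiction for the case $\alpha^*=0$. If anything, you are more careful than the paper in the second case by explicitly arguing that $\beta$ must be taken small enough to satisfy the upper bounds $y\le z$ and $\mathcal{A}y^{m-1}+\alpha^{m-1}q\le e-z$; the paper simply asserts feasibility of the constructed triple without isolating this step, and it additionally records a preliminary bound on $\alpha$ to show MIP~(\ref{mip}) is bounded, which you omit but which is not strictly required since the theorem already presupposes an optimal solution.
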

\begin{proof} For any given $\mathcal{A}\in T_{m,n}$ and $q\in \mathbb{R}^n$ with $q\ngeq 0$, MIP (\ref{mip}) always has the feasible solution $\alpha=0$, $y=0$ and $z_i=0$ or $1$ for $i\in [n]$. For any feasible solution $(\alpha,y,z)$ of  MIP (\ref{mip}), it follows from the feasibility constraints that
\begin{equation}\label{alpha}
\alpha\le \left(\frac{1+\|\mathcal{A}\|_{\infty}}{\|q\|_{\infty}}\right)^{\frac1{m-1}}.
\end{equation}
Hence, MIP (\ref{mip}) is feasible and bounded. Suppose MIP (\ref{mip}) has an optimal solution $(\alpha^*,y^*,z^*)$ with $\alpha^*>0$. Let $x={y^*}/{\alpha^*}$, then we have $x\ge 0$ and
$$
(\alpha^*)^{m-1}(\mathcal{A}x^{m-1}+q)=\mathcal{A}(y^*)^{m-1}+(\alpha^*)^{m-1}q\ge 0\quad \Rightarrow \quad \mathcal{A}x^{m-1}+q\ge 0.$$
Furthermore, for each $i\in[n]$, either $x_i=0$ or $(\mathcal{A}x^{m-1})_i+q_i=0$, so that $x^\top(\mathcal{A}x^{m-1}+q)=0$. That is, $x$ solves TCP$(\mathcal{A},q)$.

Now suppose that the optimal solution $(\alpha^*,y^*,z^*)$ to MIP (\ref{mip}) gives $\alpha^*=0$. Then we will show that TCP$(\mathcal{A},q)$ has no solution. Proof by contradiction, we assume that TCP$(\mathcal{A},q)$ has a solution. By Lemma \ref{lem1}, TCP$(\mathcal{A},\beta^{m-1}q)$ has a solution. That is, for any $\alpha>0$, there exists $y\ge 0$ such that
$$
\mathcal{A}y^{m-1}+\alpha^{m-1}q\ge 0,\quad y^\top(\mathcal{A}y^{m-1}+\alpha^{m-1}q)=0.
$$
Thus implies that $(\alpha,y,z)$  with $z_i=0$ if $y_i=0$ and $z_i=1$ otherwise for $i\in [n]$, is a feasible solution of MIP (\ref{mip}). Hence, $\alpha^*\ge \alpha>0$. This is a contradiction and hence TCP$(\mathcal{A},q)$ has no solution.
\end{proof}

By Theorem \ref{thm1}, every feasible solution $(\alpha,y,z)$ with $\alpha>0$ of MIP (\ref{mip}) corresponds to a solution of TCP$(\mathcal{A},q)$. Therefore, solving  MIP (\ref{mip}), we may generate several solutions of TCP$(\mathcal{A},q)$.

We now show that every tensor complementarity problem can be written as an equivalent mixed integer feasibility problem. The next theorem gives sufficient conditions for TCP$(\mathcal{A},q)$ without solution.
\begin{Theorem}\label{thm2}
Given a general TCP$(\mathcal{A},q)$ with $\mathcal{A}\in T_{m,n}$ and $q\in \mathbb{R}^n$, then TCP$(\mathcal{A},q)$ has no solution if the system of inequalities
\begin{equation}\label{system}
0\le \mathcal{A}x^{m-1}+q \le \tau e-u,\quad 0\le x\le \tau^{\frac{m-2}{1-m}}u,\quad 0\le u\le \tau e,\quad \tau\ge 1,
\end{equation}
is infeasible.
\end{Theorem}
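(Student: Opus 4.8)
The statement is one-directional, so I would prove its contrapositive: assuming TCP$(\mathcal{A},q)$ has a solution, I would exhibit an explicit feasible point of the system (\ref{system}), thereby showing it cannot be infeasible. The guiding observation is that (\ref{system}) is exactly what MIP (\ref{mip}) becomes under the change of variables $\tau=\alpha^{-(m-1)}$, $x=y/\alpha$ and $u=\tau z$, with the integrality of $z$ relaxed: dividing the middle block of (\ref{mip}) by $\alpha^{m-1}$ turns $e-z$ into $\tau e-u$, and the box $0\le y\le z$ turns into $0\le x\le \tau^{\frac{m-2}{1-m}}u$ because $\tau^{\frac{m-2}{1-m}}\cdot\tau=\tau^{\frac1{m-1}}=\alpha^{-1}$. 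So a TCP solution, which feeds a feasible MIP point via Theorem \ref{thm1}, should feed a feasible point of (\ref{system}) as well.

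Concretely, let $x^*$ solve TCP$(\mathcal{A},q)$ and write $r:=\mathcal{A}(x^*)^{m-1}+q$. Then $x^*\ge 0$, $r\ge 0$, and the complementarity $x^\top(\mathcal{A}x^{m-1}+q)=0$ forces $x^*_i r_i=0$ for every $i\in[n]$; that is, each index is either \emph{active} ($x^*_i>0$, $r_i=0$) or \emph{inactive} ($x^*_i=0$). I would set $x=x^*$, let $z\in\{0,1\}^n$ be the indicator of the active set ($z_i=1$ iff $x^*_i>0$), choose the scalar
\[
\tau=\max\Big\{1,\ \max_{i\in[n]} r_i,\ \max_{i\in[n]}(x^*_i)^{m-1}\Big\}\ \ge\ 1,
\]
and put $u=\tau z$. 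The triple $(x,u,\tau)=(x^*,\tau z,\tau)$ is my candidate feasible solution.

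The verification then splits across the two index types and uses the identity $\tau^{\frac{m-2}{1-m}}\cdot\tau=\tau^{\frac{1}{m-1}}$ together with the equivalence $x^*_i\le \tau^{1/(m-1)}\Leftrightarrow \tau\ge (x^*_i)^{m-1}$. For an inactive index, $u_i=0$ makes the box $0\le x^*_i\le \tau^{\frac{m-2}{1-m}}u_i$ read $0\le 0$, while the middle block needs only $r_i\le \tau$, guaranteed by the choice of $\tau$. For an active index, $r_i=0$ makes the upper bound $r_i\le \tau-u_i$ read $0\le 0$ (as $u_i=\tau$), while the box becomes $x^*_i\le \tau^{\frac{m-2}{1-m}}\tau=\tau^{1/(m-1)}$, again guaranteed by $\tau\ge (x^*_i)^{m-1}$. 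The remaining bounds $0\le r$, $0\le u\le \tau e$ and $\tau\ge 1$ are immediate. Hence (\ref{system}) is feasible, contradicting its assumed infeasibility, and the theorem follows.

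I do not expect a serious obstacle here: the content is the correct bookkeeping of the exponent $\tau^{\frac{m-2}{1-m}}$ and, above all, the recognition that complementarity decouples the two competing upper-bound constraints — the bound $\tau e-u$ on $\mathcal{A}x^{m-1}+q$ and the bound $\tau^{\frac{m-2}{1-m}}u$ on $x$ — so that on each coordinate at most one of them is binding. The only point demanding a little care is confirming that a single global $\tau$ can simultaneously dominate both $\max_i r_i$ (for inactive coordinates) and $\max_i (x^*_i)^{m-1}$ (for active coordinates); taking the maximum of the two, and of $1$, settles it. It is worth noting that the converse fails, since $u$ here is continuous rather than binary and the system is a genuine relaxation of the complementarity conditions, which is exactly why the theorem asserts only the ``no solution'' direction.
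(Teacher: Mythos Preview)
Your proof is correct and follows the same contrapositive strategy as the paper: from a TCP solution, exhibit a feasible point of \eqref{system} with $u\in\{0,\tau\}^n$. The execution differs slightly. The paper routes through the MIP: it invokes Lemma~\ref{lem1} and Theorem~\ref{thm1} to obtain a feasible MIP point $(\alpha,y,z)$ with $0<\alpha\le 1$, then applies the change of variables $\tau=\alpha^{-(m-1)}$, $x=y/\alpha$, $u=z/\alpha^{m-1}$ to land in \eqref{system}. You instead construct the feasible point directly from the TCP solution $x^*$, choosing $\tau=\max\{1,\max_i r_i,\max_i (x^*_i)^{m-1}\}$ and $u=\tau z$ with $z$ the active-set indicator. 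Your route is a bit more self-contained---it does not need the earlier lemma and theorem as intermediaries---while the paper's version makes the relationship to the MIP transparent; the resulting feasible points are the same up to the freedom in scaling.
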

\begin{proof} We assume by contradiction that TCP$(\mathcal{A},q)$ has a solution. By Lemma \ref{lem1}, TCP$(\mathcal{A},\beta q)$ also has a solution for any $\beta>0$. By Theorem \ref{thm1}, MIP (\ref{mip}) has an optimal solution $(\alpha, y,z)$ with $\alpha>0$. Without loss of generality we may assume that $\alpha\le 1$ due to (\ref{alpha}). Hence,
\begin{eqnarray*}
&& 0\le \mathcal{A}y^{m-1}+\alpha^{m-1}q \le e-z,\\
&& 0\le y\le z,\quad z\in\{0,1\}^n,\\
&& 0<\alpha \le 1.
\end{eqnarray*}
In the above system, we set
$$
\tau=\frac1{\alpha^{m-1}},\quad x=\frac{y}{\alpha},\quad u=\frac{z}{\alpha^{m-1}}.
$$
Then $(\tau, x,u)$ is a solution to the following system
$$
0\le \mathcal{A}x^{m-1}+q \le \tau e-u,\quad 0\le x\le \tau^{\frac{m-2}{1-m}}u,\quad u\in\{0,\tau\}^n,\quad \tau\ge 1.
$$
Hence, the system (\ref{system}) is feasible, which is a contradiction.
\end{proof}

By Theorem \ref{thm2}, if TCP$(\mathcal{A},q)$ is solvable then the corresponding system (\ref{system}) is feasible. More generally, the following theorem gives a necessary and sufficient condition for existence of solutions to
TCP$(\mathcal{A},q)$.
\begin{Theorem}
Given a general TCP$(\mathcal{A},q)$ with $\mathcal{A}\in T_{m,n}$ and $q\in \mathbb{R}^n$, then TCP$(\mathcal{A},q)$ is solvable if and only if the system
\begin{eqnarray}\label{system2}
&& 0\le \mathcal{A}x^{m-1}+q \le \tau e-u, \quad 0\le x\le \tau^{\frac{m-2}{1-m}}u, \nonumber\\
&& u\in\{0,\tau\}^n,\quad \tau\ge 1,
\end{eqnarray}
is feasible.
\end{Theorem}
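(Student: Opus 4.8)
The plan is to prove both implications separately, observing that the forward direction is already essentially contained in the proof of Theorem~\ref{thm2}, while the reverse direction is the genuinely new content that upgrades that one-sided statement into a full equivalence.

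For the reverse implication (feasibility of (\ref{system2}) implies solvability), I would take any feasible triple $(\tau,x,u)$ and show that $x$ itself already solves TCP$(\mathcal{A},q)$, with no rescaling required. The first two lines of (\ref{system2}) immediately give $x\ge 0$ and $\mathcal{A}x^{m-1}+q\ge 0$, so only the complementarity condition remains. Here the integrality $u\in\{0,\tau\}^n$ does all the work: fixing an index $i$, if $u_i=\tau$ then $0\le(\mathcal{A}x^{m-1}+q)_i\le\tau-u_i=0$ forces $(\mathcal{A}x^{m-1}+q)_i=0$, while if $u_i=0$ then $0\le x_i\le\tau^{\frac{m-2}{1-m}}u_i=0$ forces $x_i=0$. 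In either case $x_i(\mathcal{A}x^{m-1}+q)_i=0$, and summing over $i\in[n]$ yields $x^\top(\mathcal{A}x^{m-1}+q)=0$; hence $x$ solves the problem.

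For the forward implication (solvability implies feasibility of (\ref{system2})), I would give a direct construction from a solution $w$ of TCP$(\mathcal{A},q)$, bypassing the detour through MIP (\ref{mip}) used in Theorem~\ref{thm2}, although one may equally reuse that proof since it already manufactures a triple with $u\in\{0,\tau\}^n$. I would set $x=w$, choose $\tau\ge\max\{1,\ \max_i w_i^{m-1},\ \max_i(\mathcal{A}w^{m-1}+q)_i\}$, and define $u_i=\tau$ when $w_i>0$ and $u_i=0$ when $w_i=0$, so that $u\in\{0,\tau\}^n$ and $\tau\ge 1$ hold automatically. The remaining inequalities are then checked coordinatewise using complementarity $w_i(\mathcal{A}w^{m-1}+q)_i=0$: on indices with $w_i>0$ complementarity gives $(\mathcal{A}w^{m-1}+q)_i=0=\tau-u_i$ and the choice of $\tau$ gives $w_i\le\tau^{1/(m-1)}=\tau^{\frac{m-2}{1-m}}u_i$, whereas on indices with $w_i=0$ the bound $(\mathcal{A}w^{m-1}+q)_i\le\tau=\tau-u_i$ holds by the choice of $\tau$ and $x_i=0$ trivially meets its bound.

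The one point I would watch most carefully is the exponent bookkeeping in the bound $x\le\tau^{\frac{m-2}{1-m}}u$: one must confirm the identity $\tau^{\frac{m-2}{1-m}}\cdot\tau=\tau^{\frac{1}{m-1}}$ so that on the active indices the constraint reads exactly $w_i\le\tau^{1/(m-1)}$, which $\tau\ge\max_i w_i^{m-1}$ guarantees. Beyond this, the argument is routine coordinatewise verification and uses no positive-definiteness or other structural hypothesis on $\mathcal{A}$, so the equivalence holds for every $\mathcal{A}\in T_{m,n}$ and $q\in\mathbb{R}^n$.
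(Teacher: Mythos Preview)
Your proof is correct. The reverse implication (feasibility of (\ref{system2}) $\Rightarrow$ solvability) is argued exactly as in the paper: take a feasible triple and verify coordinatewise that the integrality constraint forces complementarity, so that $x$ itself solves TCP$(\mathcal{A},q)$.

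For the forward implication the paper simply invokes the construction already carried out in the proof of Theorem~\ref{thm2}, which passes through MIP~(\ref{mip}) and Theorem~\ref{thm1}: from an optimal $(\alpha,y,z)$ with $0<\alpha\le 1$ one sets $\tau=\alpha^{1-m}$, $x=y/\alpha$, $u=z/\alpha^{m-1}$. Your direct construction from a solution $w$---taking $x=w$, $\tau\ge\max\{1,\max_i w_i^{m-1},\max_i(\mathcal{A}w^{m-1}+q)_i\}$, and $u_i\in\{0,\tau\}$ according to whether $w_i>0$---is a genuinely different and more elementary route: it avoids Lemma~\ref{lem1}, Theorem~\ref{thm1}, and the bound (\ref{alpha}) entirely, and it makes the proof self-contained rather than dependent on the MIP machinery. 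Your exponent check $\tau^{\frac{m-2}{1-m}}\cdot\tau=\tau^{1/(m-1)}$ is exactly the right bookkeeping point, and your choice of $\tau$ covers both the upper bound on $x_i$ and the upper bound on $(\mathcal{A}w^{m-1}+q)_i$ in one stroke. The paper's detour buys consistency with the surrounding narrative about MIP~(\ref{mip}); your argument buys independence and transparency.
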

\begin{proof}
The necessary condition is easily obtained from the proof line of Theorem \ref{thm2}. We now show the sufficient condition. Since the system (\ref{system2}) is feasible, let $(x^*,u^*,\tau^*)$ be a feasible point of (\ref{system2}). Clearly, we have
$$
x^*\ge 0,\quad \mathcal{A}(x^*)^{m-1}+q\ge 0.
$$
Furthermore, when $x^*_i>0$ for some $i\in[n]$, we have $u^*_i=\tau$ and hence $(\mathcal{A}x^{m-1}+q)_i=0$. Thus,
$$
(x^*)^\top\left(\mathcal{A}(x^*)^{m-1}+q\right)=0.
$$
Therefore, $x^*$ is a solution of TCP$(\mathcal{A},q)$. That is, TCP$(\mathcal{A},q)$ is solvable. This completes the proof.
\end{proof}

Let $\mathcal{A}\in S_{m,n}$. By \cite[Theorem 4.5]{cqw2016}, we know that if $\mathcal{A}$ is positive definite then TCP$(\mathcal{A},q)$ is solvable for any $q\in\mathbb{R}^n$ and its solution set is nonempty and compact. Furthermore, the following theorem gives a concrete bound.
\begin{Theorem}
Let $\mathcal{A}\in S_{m,n}$ and $q\in\mathbb{R}^n$. If $\mathcal{A}$ is positive definite, then the solution set of TCP$(\mathcal{A},q)$ is contained in the set
$$
B=\left\{x\in \mathbb{R}^n:\, \|x\|\le \left(\frac{\|q\|}{\lambda_{min}}\right)^{\frac1{m-1}}\right\},
$$
where $\lambda_{min}$ is the smallest Z-eigenvalue of $\mathcal{A}$.
\end{Theorem}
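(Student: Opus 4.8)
The plan is to combine the variational characterization (\ref{smallest}) of the smallest Z-eigenvalue with the degree-$m$ homogeneity of the form $\mathcal{A}x^m$ and the complementarity condition defining the TCP. Let $x$ be an arbitrary solution of TCP$(\mathcal{A},q)$; the bound is trivial when $x=0$, so I would assume $x\neq 0$ throughout. The first step is to extract a scalar identity from the complementarity relation: since $x^\top(\mathcal{A}x^{m-1}+q)=0$, expanding the inner product gives $\mathcal{A}x^m=x^\top\mathcal{A}x^{m-1}=-x^\top q$.

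Next I would estimate $\mathcal{A}x^m$ from both sides. For the lower bound, normalize $x$ by setting $\hat x=x/\|x\|$, so that $\hat x^\top\hat x=1$ and $\hat x$ is admissible in the minimization (\ref{smallest}); hence $\mathcal{A}\hat x^m\ge\lambda_{\min}$. Because each component of $\mathcal{A}x^{m-1}$, and therefore $\mathcal{A}x^m$ itself, is homogeneous of degree $m$ in $x$, we have $\mathcal{A}x^m=\|x\|^m\,\mathcal{A}\hat x^m\ge\lambda_{\min}\|x\|^m$. For the upper bound, the Cauchy--Schwarz inequality gives $-x^\top q\le\|x\|\,\|q\|$.

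Putting these together with the identity from the first step yields the chain $\lambda_{\min}\|x\|^m\le\mathcal{A}x^m=-x^\top q\le\|x\|\,\|q\|$. Positive definiteness of $\mathcal{A}$ guarantees $\lambda_{\min}>0$ (an even-order symmetric tensor is positive definite if and only if all its Z-eigenvalues are positive), and since $x\neq 0$ we may divide through by $\|x\|>0$ to obtain $\lambda_{\min}\|x\|^{m-1}\le\|q\|$, which rearranges to the asserted bound $\|x\|\le(\|q\|/\lambda_{\min})^{1/(m-1)}$. I do not anticipate a genuine obstacle: the argument is short, and the only steps requiring care are the homogeneity identity $\mathcal{A}x^m=\|x\|^m\mathcal{A}\hat x^m$, which is immediate from the definition of $\mathcal{A}x^m$, and confirming that the positive definiteness hypothesis makes $\lambda_{\min}$ strictly positive, so that the final division and the lower bound are both valid.
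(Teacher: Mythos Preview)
Your proposal is correct and follows essentially the same route as the paper's proof: both use the complementarity identity $\mathcal{A}x^m=-x^\top q$, the homogeneity $\mathcal{A}x^m=\|x\|^m\mathcal{A}\hat x^m$ together with the variational characterization (\ref{smallest}), and the Cauchy--Schwarz bound $-x^\top q\le\|x\|\,\|q\|$ to conclude. Your explicit handling of the case $x=0$ is a minor improvement in presentation over the paper, which normalizes by $\|x\|$ without first excluding this case.
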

\begin{proof}
Since $\mathcal{A}$ is positive definite, by Theorem 4.5 in \cite{cqw2016}, TCP$(\mathcal{A},q)$ has a nonempty and compact set. Let $S$ be its solution set. In addition, by Theorem 5 in \cite{Qi}, Z-eigenvalues of $\mathcal{A}$ exist and the smallest Z-eigenvalue $\lambda_{min}>0$.  Taking $x\in S$, we have
\begin{equation}\label{bound1}
0=x^\top q+\mathcal{A}x^m.
\end{equation}
Setting $y=x/{\|x\|}$, we have $\|y\|=1$ and $\mathcal{A}x^m=\|x\|^m\mathcal{A}y^m$. It follows from (\ref{smallest}) that
\begin{equation}\label{bound2}
\mathcal{A}x^m\ge \lambda_{min}\|x\|^m.
\end{equation}
Combining (\ref{bound1}) and (\ref{bound2}), we obtain
$$
0\ge \mathcal{A}x^m-\|x\|\|q\|\ge \lambda_{min}\|x\|^m-\|x\|\|q\|=\|x\|\left(\lambda_{min}\|x\|^{m-1}-\|q\|\right),
$$
which yields
$$
\|x\|\le \left(\frac{\|q\|}{\lambda_{min}}\right)^{\frac1{m-1}}.
$$
Hence, $x\in B$.  This completes the proof.
\end{proof}

\section{Numerical experiments}

In this section, we solve MIP (\ref{mip}) to obtain some solutions of some given TCP$(\mathcal{A},q)$ examples. We
consider the following examples for numerical experiments with the mixed integer programming method. We implement all
experiments using Global Solver in LINGO10  on a laptop with an Intel(R) Core(TM) i5-2520M CPU(2.50GHz) and RAM of 4.00GB.

\begin{Example}\label{eg1}
Consider TCP$(\mathcal{A},q)$, where $q=(2,2)^T$ and $\mathcal{A}=(a_{i_1i_2i_3})\in T_{3,2}$ with its entries $a_{111}=1, a_{122}=-1, a_{211}=-2, a_{222}=1$ and other $a_{i_1i_2i_3}=0$.
\end{Example}

By straightforward computation, we obtain that the TCP$(\mathcal{A},q)$ in Example \ref{eg1} has two solutions
$(0,0)^T$ and $(2,\sqrt{6})^T$. Since $q=(2,2)^T>0$, the solution $(0,0)^T$ is trivial by Theorem \ref{thm0}. The corresponding MIP is written as follows:
\begin{eqnarray*}
\max_{\alpha, y, z} && \alpha^2\\
s.t. && 0\leq y_1^2-{y_2^2}+2\alpha^2\leq1-z_1,\\
&& 0\leq -2{y_1^2}+{y_2^2}+2\alpha^2\leq1-z_2,\\
&& 0\leq y_1\leq z_1,\quad 0\leq y_2\leq z_2,\\
&& z_1,z_2\in\{0,1\},\quad \alpha\ge 0.
\end{eqnarray*}

Running the LINGO code, we first obtain a solution of the above MIP:
$$
\alpha^*=0.7071068,\quad y^*=(0,0)^T, \quad z^*=(0,0)^T.
$$
By Theorem \ref{thm1}, we obtain the trivial solution $(0,0)$. We adjust the range of $\alpha$ in the constraints as $0\le \alpha <0.5$ and obtain another solution:
$$
\alpha^*=0.2,\quad y^*=(0.4,0.4898979)^T,\quad z=(1,1)^T.
$$
By Theorem \ref{thm1}, we obtain a solution of the TCP$(\mathcal{A},q)$: $x^*=y^*/\alpha^*=(2,2.4494895)^T\approx (2,\sqrt{6})^T$. We try different range of  $\alpha$ in the constrained region and obtain the two solutions. The details are listed in Table \ref{tab:1}, where {\bf Range} denotes the constraint on $\alpha$ in MIP, {\bf Iter} denotes total solver iterations, {\bf SOL-TCP} denotes the solution of TCP$(\mathcal{A},q)$.

\begin{table}[h]
	\centering
	\caption{The numerical results for Example \ref{eg1}}
	\label{tab:1}       
	\begin{tabular}{cccccc}
		\hline\noalign{\smallskip}
	Range  & Iter & $\alpha^*$ & $(y^*)^T$ & $(z^*)^T$ & SOL-TCP \\
		\noalign{\smallskip}\hline\noalign{\smallskip}
	$0\le \alpha$ &	39 & 0.7071068 & (0,0) & (0,0)& (0,0)\\
$0\le \alpha\le 0.6$ &29 & 0.6000000 & (0,0) & (0,0)& (0,0)\\
$0\le \alpha\le 0.4$ &        91 & 0.4000000 & (0.8000000,0.9797959)& (1,1) &(2, $\sqrt{6}$) \\
$0\le \alpha\le 0.2$ &       87 & 0.2000000 &(0.4000000,0.4898979) & (1,1) &(2, $\sqrt{6}$) \\
 $0\le \alpha\le 0.1$ &      132 & 0.1000000 &(0.2000000, 0.2449490) &(1,1) &(2, $\sqrt{6}$) \\
		\noalign{\smallskip}\hline
	\end{tabular}
\end{table}

\begin{Example}\label{eg2}
 Consider TCP$(\mathcal{A},q)$, where $q=(-2,-3)^T$ and $\mathcal{A}=(a_{i_1i_2i_3})\in T_{3,2}$ with  $a_{122}=-2, a_{211}=-1$, and other $a_{i_1i_2i_3}=0$.
 \end{Example}

In this exaple, we have
$${\mathcal{A}}x^2+q=\begin{pmatrix}
     -2x_2^2-2 \\
   -x_1^2-3  \\
\end{pmatrix}
<0$$
for any $x\in\mathbb{R}^2$. Clearly, the  TCP$(\mathcal{A},q)$ has no solution. The corresponding MIP is written as follows:
\begin{eqnarray*}
\max_{\alpha, y, z} && \alpha^2\\
s.t. && 0\leq -2{y_2^2}-2\alpha^2\leq1-z_1,\\
&& 0\leq -{y_1^2}-3\alpha^2\leq1-z_2,\\
&& 0\leq y_1\leq z_1,\quad 0\leq y_2\leq z_2,\\
&& z_1,z_2\in\{0,1\},\quad \alpha\ge 0.
\end{eqnarray*}
Running the LINGO code of the above MIP, we obtain the global solution:
$$
\alpha^*=0,\quad y^*=(0.5,0)^T, \quad z^*=(1,0)^T.
$$
By Theorem \ref{thm1}, the  TCP$(\mathcal{A},q)$ in Example \ref{eg2} has no solution. We take different range of $\alpha$ in the above MIP and always obtain the same result. The details are listed in Table \ref{tab:2}.

\begin{table}[h]
	\centering
	\caption{The numerical results for Example \ref{eg2}}
	\label{tab:2}       
	\begin{tabular}{cccccc}
		\hline\noalign{\smallskip}
	Range  & Iter & $\alpha^*$ & $(y^*)^T$ & $(z^*)^T$ & SOL-TCP \\
		\noalign{\smallskip}\hline\noalign{\smallskip}
	$0\le \alpha$ &	121 & 0 & (0.5,0) & (1,0)& no\\
$0\le \alpha\le 0.6$ &	121 & 0 & (0.5,0) & (1,0)& no\\
$0\le \alpha\le 0.4$ & 	121 & 0 & (0.5,0) & (1,0)& no\\
$0\le \alpha\le 0.2$ &	121 & 0 & (0.5,0) & (1,0)& no\\
 $0\le \alpha\le 0.1$ &	121 & 0 & (0.5,0) & (1,0)& no\\
		\noalign{\smallskip}\hline
	\end{tabular}
\end{table}

\begin{Example}\label{eg3}
 Consider TCP$(\mathcal{A},q)$, where $q=(0,-1)^T$ and ${\mathcal{A}}=(a_{{i_1}{i_2}{i_3}{i_4}})\in T_{4,2}$ with $a_{1111}=1, a_{1112}=-2, a_{1122}=1,a_{2222}=1,$ and  other $a_{{i_1}{i_2}{i_3}{i_4}}=0$.
\end{Example}

This example is taken from \cite{bhw} and its TCP$(\mathcal{A},q)$ has two solutions: $x^*=(0,1)^T$ and $x^*=(1,1)^T$. By simple calculating, we have
$${\mathcal{A}}x^3+q=\begin{pmatrix}
   x_1^3-2x_1^2x_2+x_1x_2^2 \\
   x_2^3-1  \\
\end{pmatrix}
$$
and its corresponding MIP:
\begin{eqnarray*}
\max_{\alpha, y, z} && \alpha^3\\
s.t. && 0\leq y_1^3-2y_1^2y_2+y_1y_2^2\leq1-z_1,\\
&& 0\leq y_2^3-\alpha^3\leq1-z_2,\\
&& 0\leq y_1\leq z_1,\quad 0\leq y_2\leq z_2,\\
&& z_1,z_2\in\{0,1\},\quad \alpha\ge 0.
\end{eqnarray*}
Running the LINGO code of the above MIP, we obtain the global solution:
$$
\alpha^*=1,\quad y^*=(1,1)^T, \quad z^*=(1,1)^T.
$$
By Theorem \ref{thm1}, we can obtain the solution $x^*=(1,1)^T$. We change different range of $\alpha$ in its LINGO code and always obtain the solution, and we don't get another solution $(0,1)$. The details are listed in Table \ref{tab:3}.

\begin{table}[h]
	\centering
	\caption{The numerical results for Example \ref{eg3}}
	\label{tab:3}       
	\begin{tabular}{cccccc}
		\hline\noalign{\smallskip}
	Range  & Iter & $\alpha^*$ & $(y^*)^T$ & $(z^*)^T$ & SOL-TCP \\
		\noalign{\smallskip}\hline\noalign{\smallskip}
	$0\le \alpha$ &	22 & 1 & (1,1) & (1,1)& (1,1)\\
$0\le \alpha\le 0.8$ &	116 & 0.8 & (0.8001623,0.8) & (1,1)& (1.0002028,1)\\
$0\le \alpha\le 0.6$ &	101 & 0.6 & (0.6001347,0.6) & (1,1)& (1.0002245,1)\\
$0\le \alpha\le 0.4$ &	111 & 0.4 & (0.4005705,0.4) & (1,1)& (1.0014262,1)\\
$0\le \alpha\le 0.2$ &	92 & 0.2 & (0.2009366,0.2) & (1,1)& (1.0046830,1)\\
		\noalign{\smallskip}\hline
	\end{tabular}
\end{table}

We observe the MIP model carefully and find the first two constraints  without the term $\alpha$. Thus, we add a small perturbation $10^5\alpha^3$ and run the LINGO code again. Fortunately, we obtain the following solution of the above MIP:
$$
\alpha^*=1,\quad y^*=(0,1)^T, \quad z^*=(0,1)^T.
$$
By Theorem \ref{thm1}, we can obtain another solution $x^*=(0,1)^T$. The details are listed in Table \ref{tab:4}.

\begin{table}[h]
	\centering
	\caption{The numerical results with small perturbation for Example \ref{eg3}}
	\label{tab:4}       
	\begin{tabular}{cccccc}
		\hline\noalign{\smallskip}
	Range  & Iter & $\alpha^*$ & $(y^*)^T$ & $(z^*)^T$ & SOL-TCP \\
		\noalign{\smallskip}\hline\noalign{\smallskip}
	$0\le \alpha$ &	29 & 1 & (0,1) & (0,1)& (0,1)\\
$0\le \alpha\le 0.8$ &	152 & 0.8 & (0,0.8) & (0,1)& (0,1)\\
$0\le \alpha\le 0.6$ &	136 & 0.6 & (0,0.6) & (0,1)& (0,1)\\
$0\le \alpha\le 0.4$ &	123 & 0.4 & (0.4005468,0.4) & (1,1)& (1.001367,)\\
$0\le \alpha\le 0.2$ &	96 & 0.2 & (0.2003110,0.2) & (1,1)& (1.001555,1)\\
		\noalign{\smallskip}\hline
	\end{tabular}
\end{table}
 From Tables \ref{tab:3} and \ref{tab:4}, we see that Global Solver in LINGO10 is very sensitive to solve mixed zero-one integer nonlinear optimization problems. The numerical results reported in the above tables show that
  our proposed approach is a good way to solve tensor complementarity problems. It would be much better if there are powerful solvers for mixed integer nonlinear programming problems.

\section{Concluding remarks}
In this paper, we proposed a mixed zero-one integer nonlinear programming (MIP) method to solve tensor complementarity problems (TCPs). For diagonal TCPs, we gave conditions to guarantee the existence of solutions. Specially, we proved that TCPs with diagonal positive definite tensors have a unique solution and then gave a confirmed answer for \cite[Conjecture 5.1]{cqw2016}. We formulated TCPs as MIPs, based on the MIP model, we shown that TCPs are equivalent to mixed integer feasibility problems. Moreover, we gave a sufficient condition for TCPs without solutions, a necessary and sufficient condition for existence of solutions.

We proved that every feasible solution $(\alpha,y,z)$ with $\alpha>0$ of MIP (\ref{mip}) corresponds to a solution of TCP$(\mathcal{A},q)$. Therefore, solving  MIP (\ref{mip}), we may generate several solutions of TCP$(\mathcal{A},q)$. We also reported some numerical results to show the efficiency of the proposed approach.

\end{document}